\newtheorem{theorem}{Theorem}[section]
\newtheorem{remark}{Remark}[section]
\newtheorem{example}{Example}[section]
\newtheorem{lemma}{Lemma}[section]
\newtheorem{cor}{Corollary}[section]
\numberwithin {equation}{section}
\newenvironment{proof}{\textbf{Proof.}} {\hfill $\Box$}
\begin{document}
\title{\LARGE\bf Some results on the Hardy space $H^1_k$ associated with the Dunkl operators}
\author{Chokri Abdelkefi and Mongi Rachdi
 \footnote{\small This work was
completed with the support of the DGRST research project LR11ES11
and the program CMCU 10G / 1503.}\\ \small  Department of
Mathematics, Preparatory
Institute of Engineer Studies of Tunis  \\ \small 1089 Monfleury Tunis, University of Tunis, Tunisia\\
  \small E-mail : chokri.abdelkefi@ipeit.rnu.tn \\  \small E-mail : rachdi.mongi@ymail.com}%
\date{}
\maketitle
\begin{abstract}
In the present paper, we investigate in Dunkl analysis, the action
of some fundamental operators on the atomic Hardy space $H^1_k$.
\end{abstract}
{\small\bf Keywords: }{\small Dunkl operators, Dunkl transform, atomic Hardy spaces, Riesz transform, Hardy operator.\\
\noindent {\small \bf 2010 AMS Mathematics Subject Classification:}
{42B10, 46E30, 44A35.}
\section{Introduction }
\par $ $
In the classical complex analysis, the Hardy spaces were introduced
by Hardy [10] to characterize boundary values of analytic functions
on the unit disk. The one-dimensional atomic decomposition of the
Hardy spaces is due to Coifman [4] and its higher-dimensional
extension to Latter [11]. Using the atomic decomposition Coifman and
Weiss [5] extended the definition of the Hardy spaces to more
general structures.

In this paper, we consider the harmonic analysis on the Euclidean
space $\mathbb{R}^{d}$ associated to the class of rational Dunkl
operators $T_i, 1 \leq i \leq d$. Introduced by C. F. Dunkl in [6],
these operators are commuting differential-difference operators
related to an arbitrary finite reflection group $G$ and a non
negative multiplicity function $k$. If the parameter $k\equiv0$,
then the $T_i, 1 \leq i \leq d$ are reduced to the partial
derivatives $\frac{\partial}{\partial x_i}, 1 \leq i \leq d$.
Therefore Dunkl analysis can be viewed as a generalization of
classical Fourier analysis (see next section).

Inspired by the definition of usual atomic Hardy spaces, we call
$L_k^{2}$-atom any function $a$ satisfying:\\ There exists a ball
$B$ of $\mathbb{R}^{d}$ such that
 \begin{itemize}
 \item[i)] $ Supp(a)\subset B$.
 \item[ii)] $\Big(\displaystyle\int_{\mathbb{R}^d}|a(x)|^2 d\nu_{k}(x)\Big)^{\frac{1}{2}}\leq (\nu_{k}(B))^{-\frac{1}{2}}.$
 \item[iii)] $ \displaystyle\int_{B}a(x)d\nu_{k}(x)=0,$
 \end{itemize}
  where $\nu_k$ is the weighted Lebesgue measure associated to the
Dunkl operators defined by
\begin{eqnarray*}d\nu_k(x):=w_k(x)dx\quad
\mbox{with}\;\;w_k(x) = \prod_{\xi\in R_+} |\langle
\xi,x\rangle|^{2k(\xi)}, \quad x \in \mathbb{R}^d.\end{eqnarray*}
$R_+$ being a positive root system and $\langle .,.\rangle$ the
standard Euclidean scalar product on $\mathbb{R}^d$ (see next
section).

We denote by $L^p_{k}(\mathbb{R}^d)$, $1\leq p<+\infty $ the space
$L^{p}(\mathbb{R}^d,d\nu_k(x)).$ We define the atomic Hardy space
$H^{1}_k(\mathbb{R}^{d})$ as the space of all functions \\$f\in
L^{1}_{k}(\mathbb{R}^{d})\cap L^{2}_{k}(\mathbb{R}^{d})$
 which can be written $f=\displaystyle\sum_{j=1}^{+\infty}\lambda_{j}a_{j},$ where the series converges
 in $L^{1}_{k}(\mathbb{R}^{d})\cap L^{2}_{k}(\mathbb{R}^{d})$
 with $a_{j}$ are $L_k^{2}$-atoms and $\displaystyle\sum_{j=1}^{+\infty}|\lambda_{j}|<+\infty
 .$ If $f\in  H_k^{1}(\mathbb{R}^{d})$,
we define $$\|f\|_{H_k^{1}}=\inf
\Big\{\sum_{j=1}^{+\infty}|\lambda_{j}|:\,f=\displaystyle\sum_{j=1}^{+\infty}\lambda_{j}a_{j}\Big\}.$$
Here the infimum is taken over all atomic decompositions of $f$.\\

The aim of this paper is to investigate the action of some
fundamental operators on the atomic Hardy space
$H_k^{1}(\mathbb{R}^{d})$. We use $\|\ .\;\|_{p,k}$ as a shorthand
for $\|\ .\;\|_{L^p_k( \mathbb{R}^d)}$. First, we consider the Riesz
transforms which are the operators $\mathcal{R}_{j}^k,\;j=1,...,d$
(see [3, 16]) defined on $L^{2}_{k}(\mathbb{R}^{d})$  by
\begin{eqnarray*}
% \nonumber to remove numbering (before each equation)
  \mathcal{R}_{j}^k(f)(x) =\lambda_{k}
   \,\lim_{\epsilon\rightarrow 0}\int_{\|y\|>\epsilon}\tau_{x}(f)(-y)\frac{y_{j}}{\|y\|^{2\gamma+d+1}}d\nu_{k}(y),x\in\mathbb{R}^{d}
\end{eqnarray*} where $\tau_x$ is the Dunkl translation operator and
\begin{eqnarray*}
 \lambda_{k}=2^{\gamma+\frac{d}{2}}\frac{\Gamma(\gamma+\frac{d+1}{2})}{\sqrt{\pi}},\quad with \,\,\gamma=
 \sum_{\xi \in R_+}k(\xi).
\end{eqnarray*}We show that
$\mathcal{R}_{j}^k $ maps $H_k^{1}(\mathbb{R}^{d})$ to
$L_k^{1}(\mathbb{R}^{d})$ and for $f$ in $H_k^{1}(\mathbb{R}^{d})$,
we have
\begin{eqnarray*}\|\mathcal{R}_{j}^k(f)\|_{1,k}\leq c\,\|f\|_{H_k^{1}}.
\end{eqnarray*} It was proved in [3] that the Riesz transform $\mathcal{R}_{j}^k$
 is of a weak-type (1,1) and it can
be extended to a bounded operator from $L_{k}^{p}(\mathbb{R}^{d})$
into it self for $1<p<+\infty$:
\begin{eqnarray}\|\mathcal{R}_{j}^k(f)\|_{p,k}\leq c\,\|f\|_{p,k}\,,\;\mbox{for}\;f\in L_{k}^{p}(\mathbb{R}^{d}).\end{eqnarray}
Second, we prove that for $f$ in $H_k^{1}(\mathbb{R}^{d})$, the
Dunkl transform  $\mathcal{F}_k$ which enjoys properties similar to
those of the classical Fourier transform (see next section),
satisfies the inequality
\begin{eqnarray*}\int_{\mathbb{R}^{d}}\|y\|^{-(2\gamma+d)}|\mathcal{F}_{k}(f)(y)|d\nu_{k}(y)\leq
c\,\|f\|_{H_k^{1}}.\end{eqnarray*} The case $1<p\leq2$ was shown for
$f$ in $L_k^{p}(\mathbb{R}^{d})$ (see [1], Section 4, Lemma 1) and
gives the Hardy-Littlewood-Paley inequality
\begin{eqnarray*}\Big(\int_{\mathbb{R}^{d}}\|x\|^{(2\gamma+d)(p-2)}|\mathcal{F}_{k}(f)(x)|^{p}
d\nu_{k}(x)\Big)^{\frac{1}{p}} \leq c\,\|f\|_{p,k}.\end{eqnarray*}
Finally, we establish that for all $f$ in $H_k^{1}(\mathbb{R}^{d})$,
we have
\begin{eqnarray*}
\|\mathcal{H}_kf\|_{1,k}\leq 2\,\|f\|_{H_k^{1}},
\end{eqnarray*}
where $\mathcal{H}_k$ is a Hardy-type averaging operator given by
$$\mathcal{H}_kf(x)=\frac{1}{\nu_{k}(B_{\|x\|})}\int_{B_{\|x\|}}f(y)d\nu_{k}(y),$$
with $B_{\|x\|}=B(0,\|x\|)$ is the ball of radius $\|x\|$  centered
at $0$. The case $1<p<+\infty$ was obtained for $f$ in
$L_k^{p}(\mathbb{R}^{d})$ (see [2]) and gives the weighted Hardy
inequality
\begin{eqnarray} \Big(\int_{\mathbb{R}^d}
\Big(\frac{1}{\nu_k(B_{\|x\|})}\int_{B_{\|x\|}}|f(y)|
d\nu_k(y)\Big)^p d\nu_k(x)\Big)^\frac{1}{p} \leq \frac{p}{p-1}
\,\|f\|_{p,k}.\end{eqnarray}This is an extension to the Dunkl theory of the results  obtained in the classical harmonic analysis (see [9, 10, 14]).\\

The contents of this paper are as follows. \\In section 2, we
collect some basic definitions and results about harmonic analysis
associated with the rational Dunkl operators. \\
In section 3, we investigate in three subsection, the action
respectively of the Riesz transforms
$\mathcal{R}_{j}^k,\,j=1,...,d$, the Dunkl transform
$\mathcal{F}_{k}$ and the Hardy-type averaging
operator $\mathcal{H}_k$, on the atomic Hardy spaces  $H_k^{1}(\mathbb{R}^{d})$.\\

Along this paper, we denote $\langle .,.\rangle$ the usual Euclidean
inner product in $\mathbb{R}^d$ as well as its extension to
$\mathbb{C}^d \times\mathbb{C}^d$, we write for $x \in
\mathbb{R}^d,$ $\|x\| = \sqrt{\langle x,x\rangle}$ and we use $c$ to
represent a suitable positive constant which is not necessarily the
same in each occurrence. Furthermore, we denote by

$\bullet\quad \mathcal{E}(\mathbb{R}^d)$ the space of infinitely
differentiable functions on $\mathbb{R}^d$.

$\bullet\quad \mathcal{S}(\mathbb{R}^d)$ the Schwartz space of
functions in $\mathcal{E}( \mathbb{R}^d)$ which are rapidly
decreasing as well as their derivatives.

$\bullet\quad \mathcal{D}(\mathbb{R}^d)$ the subspace of
$\mathcal{E}(\mathbb{R}^d)$ of compactly supported functions.
\section{Preliminaries}
 $ $ In this section, we recall some notations and
results in Dunkl
theory and we refer for more details to [7, 8, 12] and the surveys [13].\\

Let $G$ be a finite reflection group on $\mathbb{R}^{d}$, associated
with a root system $R$. For $\alpha\in R$, we denote by
$\mathbb{H}_\alpha$ the hyperplane orthogonal to $\alpha$. For a
given $\beta\in\mathbb{R}^d\backslash\bigcup_{\alpha\in R}
\mathbb{H}_\alpha$, we fix a positive subsystem $R_+=\{\alpha\in R:
\langle \alpha,\beta\rangle>0\}$. We denote by $k$ a nonnegative
multiplicity function defined on $R$ with the property that $k$ is
$G$-invariant. We associate with $k$ the index
$$\gamma = \sum_{\xi \in R_+} k(\xi) \geq 0,$$
and a weighted measure $\nu_k$ given by
\begin{eqnarray*}d\nu_k(x):=w_k(x)dx\quad
\mbox{ where }\;\;w_k(x) = \prod_{\xi\in R_+} |\langle
\xi,x\rangle|^{2k(\xi)}, \quad x \in \mathbb{R}^d.\end{eqnarray*}
Note that $ (\mathbb{R}^{d}, \,d\nu_k(x))$ is a space of homogeneous
type, that is, there exists a constant $c>0$ such that
\begin{eqnarray}
  \nu_{k}(B(x,2r)) \leq c\, \nu_{k}(B(x,r)),\quad\forall\,x\in
  \mathbb{R}^{d},\:r>0,
\end{eqnarray}where $B(x,r)$ is the closed ball of radius $r$ centered at $x$ (see[14], Ch.1).

For every $1 \leq p \leq + \infty$, we denote by
$L^p_k(\mathbb{R}^d)$ the spaces $L^{p}(\mathbb{R}^d, d\nu_k(x)),$
 and $L^p_k( \mathbb{R}^d)^{rad}$ the subspace of those $f \in L^p_k(
\mathbb{R}^d)$ that are radial. We use $\|\ \;\|_{p,k}$ as a
shorthand for $\|\ \;\|_{L^p_k( \mathbb{R}^d)}$.

We introduce the Mehta-type constant $c_k$ by
$$c_k = \left(\int_{\mathbb{R}^d} e^{- \frac{\|x\|^2}{2}}
w_k (x)dx\right)^{-1}.$$ By using the homogeneity of degree
$2\gamma$ of $w_k$, it was shown in [8] that for a function $f$ in
$L^1_k ( \mathbb{R}^d)^{rad}$, there exists a function $F$ on $[0, +
\infty)$ such that $f(x) = F(\|x\|)$, for all $x \in \mathbb{R}^d$.
The function $F$ is integrable with respect to the measure
$r^{2\gamma+d-1}dr$ on $[0, + \infty)$ and we have
 \begin{eqnarray} \int_{\mathbb{R}^d}  f(x)\,d\nu_k(x)&=&\int^{+\infty}_0
\Big( \int_{S^{d-1}}f(ry)w_k(ry)d\sigma(y)\Big)r^{d-1}dr\nonumber\\
&=&
 \int^{+\infty}_0
\Big( \int_{S^{d-1}}w_k(ry)d\sigma(y)\Big)
F(r)r^{d-1}dr\nonumber\\&= & d_k\int^{+ \infty}_0 F(r)
r^{2\gamma+d-1}dr,
\end{eqnarray}
  where $S^{d-1}$
is the unit sphere on $\mathbb{R}^d$ with the normalized surface
measure $d\sigma$  and \begin{eqnarray*}d_k=\int_{S^{d-1}}w_k
(x)d\sigma(x) = \frac{c^{-1}_k}{2^{\gamma +\frac{d}{2} -1}
\Gamma(\gamma + \frac{d}{2})}\,.  \end{eqnarray*} In particular,
\begin{eqnarray*}
\nu_{k}(B(0,R))=\frac{d_{k}}{2\gamma+d}R^{2\gamma+d}.
\end{eqnarray*}

The Dunkl operators $T_j\,,\ \ 1\leq j\leq d\,$, on $\mathbb{R}^d$
associated with the reflection group $G$ and the multiplicity
function $k$ are the first-order differential-difference operators
given by
$$T_jf(x)=\frac{\partial f}{\partial x_j}(x)+\sum_{\alpha\in R_+}k(\alpha)
\alpha_j\,\frac{f(x)-f(\rho_\alpha(x))}{\langle\alpha,x\rangle}\,,\quad
f\in\mathcal{E}(\mathbb{R}^d)\,,\quad x\in\mathbb{R}^d\,,$$ where
$\rho_\alpha$ is the reflection on the hyperplane
$\mathbb{H}_\alpha$ and $\alpha_j=\langle\alpha,e_j\rangle,$
$(e_1,\ldots,e_d)$ being the canonical basis of $\mathbb{R}^d$.
\begin{remark}In the case $k\equiv0$, the weighted function $w_k\equiv1$ and the measure $\nu_k$ associated to the
Dunkl operators coincide with the Lebesgue measure. The $T_j$ are
reduced to the corresponding partial derivatives. Therefore, the
Dunkl theory can be viewed as a generalization of the classical
Fourier analysis.\end{remark}

For $y \in \mathbb{C}^d$, the system
$$\left\{\begin{array}{lll}T_ju(x,y)&=&y_j\,u(x,y),\qquad1\leq j\leq d\,,\\  &&\\
u(0,y)&=&1\,.\end{array}\right.$$ admits a unique analytic solution
on $\mathbb{R}^d$, denoted by $E_k(x,y)$ and called the Dunkl
kernel. This kernel has a unique holomorphic extension to
$\mathbb{C}^d \times \mathbb{C}^d $.\\
 We have for all $\lambda\in
\mathbb{C}$ and $z, z'\in \mathbb{C}^d,\;
 E_k(z,z') = E_k(z',z)$,  $E_k(\lambda z,z') = E_k(z,\lambda z')$.\\
(See [13]) For all $x\in \mathbb{R}^{d}$, $y\in \mathbb{C}^{d}$ and
all multi-indices $\alpha =(\alpha_{1}, \alpha_{2},...,
\alpha_{d})\in \mathbb{Z}_{+}^{d},$
\begin{eqnarray*}|\partial_{y}^{\alpha}E_{k}(x,y)|\leq \|x\|^{|\alpha|}\max_{g\in G}e^{\mathcal{R}e<g.x,y>},
\end{eqnarray*}where $|\alpha|=\displaystyle{\sum_{j=1}^{d}\alpha_{j}}.$
In particular, for all $x,y\in \mathbb{R}^{d},$
\begin{eqnarray}
|E_{k}(-ix,y)|\leq 1 \qquad \mbox{and}\qquad
|\frac{\partial}{\partial y_{j}}E_{k}(-ix,y)|\leq \|x\|, \quad
\forall\,j=1,...,d.
\end{eqnarray}
The Dunkl transform $\mathcal{F}_k$ is defined for $f \in
\mathcal{D}( \mathbb{R}^d)$ by
$$\mathcal{F}_k(f)(x) =c_k\int_{\mathbb{R}^d}f(y) E_k(-ix, y)d\nu_k(y),\quad
x \in \mathbb{R}^d.$$  We list some known properties of this
transform:
\begin{itemize}
\item[i)] If $f$ is in $L^1_k( \mathbb{R}^d)$, then $\mathcal{F}_k(f)$ is in $\mathcal{C}_0(\mathbb{R}^{d})$
where $\mathcal{C}_0(\mathbb{R}^{d})$ denotes the space of
continuous functions on $ \mathbb{R}^{d}$ which vanish at infinity
and we have
\begin{eqnarray*}\| \mathcal{F}_k(f)\|_{\infty} \leq
 \|f\|_{ 1,k}\;. \end{eqnarray*}
\item[ii)] The Dunkl transform is an automorphism on the Schwartz space $\mathcal{S}(\mathbb{R}^d)$.
\item[iii)] When both $f$ and $\mathcal{F}_k(f)$ are in $L^1_k( \mathbb{R}^d)$,
 we have the inversion formula \begin{eqnarray*} f(x) =   \int_{\mathbb{R}^d}\mathcal{F}_k(f)(y) E_k( ix, y)d\nu_k(y),\quad
x \in \mathbb{R}^d.\end{eqnarray*}
\item[iv)] (Plancherel's theorem) The Dunkl transform on $\mathcal{S}(\mathbb{R}^d)$
 extends uniquely to an isometric automorphism on
$L^2_k(\mathbb{R}^d)$.
\end{itemize}
K. Trim\`eche has introduced in [17] the Dunkl translation operators
$\tau_x$, $x\in\mathbb{R}^d$, on $\mathcal{E}( \mathbb{R}^d)\;.$ For
$f\in \mathcal{S}( \mathbb{R}^d)$ and $x, y\in\mathbb{R}^d$, we have
\begin{eqnarray*}\mathcal{F}_k(\tau_x(f))(y)=E_k(i x, y)\mathcal{F}_k(f)(y).\end{eqnarray*}  Notice that for all $x,y\in\mathbb{R}^d$,
$\tau_x(f)(y)=\tau_y(f)(x)$ and for fixed $x\in\mathbb{R}^d,$
\begin{eqnarray*}\tau_x \mbox{\; is a continuous linear mapping from \;}
\mathcal{E}( \mathbb{R}^d) \mbox{\;
into\;}\mathcal{E}(\mathbb{R}^d)\,.\end{eqnarray*} As an operator on
$L_k^2(\mathbb{R}^d)$, $\tau_x$ is bounded. According to ([15],
Theorem 3.7), the operator $\tau_x$ can be extended to the space of
radial functions $L^p_k(\mathbb{R}^d)^{rad},$ $1 \leq p \leq 2$ and
we have for a function $f$ in $L^p_k(\mathbb{R}^d)^{^{rad}}$,
\begin{eqnarray*}\|\tau_x(f)\|_{p,k} \leq \|f\|_{p,k}.\end{eqnarray*}

(see [16]) The Riesz transforms $\mathcal{R}_{j}^k$, $j=1...d,$ are
multiplier operators given by
\begin{eqnarray}
\mathcal{F}_{k}(\mathcal{R}_{j}^k(f))(\xi) =
\frac{-i\xi_{j}}{\|\xi\|}\,\mathcal{F}_{k}(f)(\xi),\quad\mbox{for}\;f\in
  \mathcal{S}(\mathbb{R}^{d}).
\end{eqnarray}
It was proved in [3] that for  $f\in L^{2}_k(\mathbb{R}^{d})$ with
compact support and for all $x\in \mathbb{R}^{d}$
  such that $g.x\notin\,supp(f),$ $\forall\;g\in G,$ we have
 \begin{eqnarray}
 \mathcal{R}_{j}^k(f)(x)=\int_{\mathbb{R}^{d}}\mathcal{K}_{j}(x,y)f(y)d\nu_{k}(y),\end{eqnarray}
  where the kernel $\mathcal{K}_{j},$ $j=1...d,$ satisfy
  \begin{eqnarray}
    \int_{\displaystyle{\min_{g\in G}}\|g.x-y\|>2\|y-y_{0}\|}|\mathcal{K}_{j}(x,y)-\mathcal{K}_{j}(x,y_{0})|d\nu_{k}(x)
     \leq c\,,\quad\mbox{for}\; y,y_{0} \in \mathbb{R}^{d}.
  \end{eqnarray} Using the Calder\'{o}n-Zygmund
decomposition, the authors in [3] showed that the Riesz transform
$\mathcal{R}_{j}^k$ is of a weak-type (1,1) and it can be extended
to a bounded operator from $L_{k}^{p}(\mathbb{R}^{d})$ into it self
for $1<p<+\infty$.
\section{Some results on the Hardy space $H^1_k$}
We begin with a remark and an example.
\begin{remark}
By the definition given in the introduction, if $a$ is an
$L_k^{2}$-atom, then
$$\|a\|_{1,k}=\int_{B}|a(y)|d\nu_{k}(y)\leq
(\nu_{k}(B))^{\frac{1}{2}}\|a\|_{2,k}\leq 1.$$
\end{remark}
\begin{example} Fix $h\in L^{2}_{k}(\mathbb{R}^{d})$ with $\|h\|_{2,k}=
1$ and let $(B_{j})_{j \in \mathbb{N}}$ be a family of balls such
that
$$B_{j}\subset C_{j}=\Big\{x\in \mathbb{R}^{d};
4^{-\frac{j+1}{2\gamma+d}}\leq \|x\|\leq
4^{-\frac{j}{2\gamma+d}}\Big\}.$$
 The average of $h$ over $B_{j}$ is given by $$Avgh=\displaystyle\frac{1}{\nu_{k}(B_{j})}\int_{B_{j}}h(x)d\nu_{k}(x).$$
Put
$a_{j}=(\nu_{k}(B_{j}))^{-\frac{1}{2}}\displaystyle\Big(h-Avgh\Big)\chi_{B_{j}},$
then each $a_{j}$ is an $L_k^{2}$-atom and the function
$f=\displaystyle\sum_{j=1}^{+\infty}(\nu_{k}(B_{j}))^{\frac{1}{2}}a_{j}$
is in $H_k^{1}(\mathbb{R}^{d})$ with
\begin{eqnarray*}\|f\|_{H_k^{1}}\leq\sum_{j=
1}^{+\infty}(\nu_{k}(B_{j}))^{\frac{1}{2}}\leq \frac{
\sqrt{d_{k}}}{\sqrt{2\gamma+d}}.\end{eqnarray*} Indeed, $a_{j}$
satisfies $supp(a_{j})\subset B_{j},$ $\displaystyle\int_{B_{j}}
a_{j}(x)d\nu_{k}(x)=0$ and\begin{eqnarray*} \|a_{j}\|_{2,k}^{2}
&= & (\nu_{k}(B_{j}))^{-1}\int_{B_{j}} \Big|h(x)-Avgh\Big|^{2}d\nu_{k}(x) \\
 &= & (\nu_{k}(B_{j}))^{-1}\int_{B_{j}} (h(x))^{2}d\nu_{k}(x)+\nu_{k}(B_{j}) (Avgh)^{2}-2Avgh\int_{B_{j}} h(x)d\nu_{k}(x)\\
 &= & (\nu_{k}(B_{j}))^{-1}\int_{B_{j}} (h(x))^{2}d\nu_{k}(x)-\nu_{k}(B_{j}) (Avgh)^{2}\\
 &\leq & (\nu_{k}(B_{j}))^{-1}\int_{B_{j}} (h(x))^{2}d\nu_{k}(x)\leq
 (\nu_{k}(B_{j}))^{-1}.
  \end{eqnarray*}
  Then we obtain,
  \begin{eqnarray*}\|f\|_{H_k^{1}}\leq \sum_{j=1}^{+\infty}(\nu_{k}(B_{j}))^{\frac{1}{2}}&\leq&\sum_{j= 1}^{+\infty}\Big[\nu_{k}
  (B(0,4^{-\frac{j}{2\gamma+d}}))\Big]^{\frac{1}{2}}\\&=& \frac{\sqrt{d_{k}}}{\sqrt{2\gamma+d}}\sum_{j=1}^{+\infty}2^{-j}=\frac{\sqrt{d_{k}}}
  {\sqrt{2\gamma+d}}.\end{eqnarray*}
\end{example}
\subsection{Riesz transform on the Hardy space}
In this section, we prove that the Riesz transforms are bounded
operators from $H^1_k(\mathbb{R}^d)$ to $L^1_k(\mathbb{R}^d)$.
Before, we need to prove a useful lemma.
\begin{lemma}
There exists a constant $c> 0$ such that for all $L^{2}_k$-atom $a$,
we have\begin{eqnarray*}\|\mathcal{R}_{j}(a)\|_{1,k}\leq c, \quad
1\leq j \leq d.\end{eqnarray*}
\end{lemma}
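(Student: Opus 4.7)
The plan is to fix an $L^2_k$-atom $a$ supported in a ball $B=B(y_0,r)$ and split $\mathbb{R}^d$ into a ``near'' region (the orbit of an inflated ball under the reflection group $G$) and its complement. Concretely, set
\begin{eqnarray*}
\widetilde{B}=\Big\{x\in\mathbb{R}^d :\min_{g\in G}\|g.x-y_0\|\leq 3r\Big\}=\bigcup_{g\in G}g^{-1}.B(y_0,3r),
\end{eqnarray*}
and decompose $\|\mathcal{R}_j^k(a)\|_{1,k}=I_1+I_2$, with $I_1$ (resp.\ $I_2$) the integral on $\widetilde{B}$ (resp.\ on $\widetilde{B}^c$). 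Since $\widetilde{B}$ is a finite union of $|G|$ balls of radius $3r$, the doubling property (2.3), applied a bounded number of times, yields $\nu_k(\widetilde{B})\leq c\,\nu_k(B)$ with a constant independent of $B$.

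For the near piece $I_1$, I would combine H\"older's inequality with the $L^2_k$-boundedness (1.1) of the Riesz transform and the size condition ii) of the atom:
\begin{eqnarray*}
I_1\leq (\nu_k(\widetilde{B}))^{1/2}\,\|\mathcal{R}_j^k(a)\|_{2,k}\leq c\,(\nu_k(B))^{1/2}\,\|a\|_{2,k}\leq c.
\end{eqnarray*}

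For the far piece $I_2$, the key geometric observation is that for every $x\in\widetilde{B}^c$ and every $y\in B$,
\begin{eqnarray*}
\min_{g\in G}\|g.x-y\|\geq \min_{g\in G}\|g.x-y_0\|-\|y-y_0\|>3r-r=2r\geq 2\|y-y_0\|.
\end{eqnarray*}
In particular, $g.x\notin B$ for all $g\in G$, so the kernel representation (2.4) applies. Using the cancellation condition iii) to insert $\mathcal{K}_j(x,y_0)$ for free, I get
\begin{eqnarray*}
\mathcal{R}_j^k(a)(x)=\int_B\bigl[\mathcal{K}_j(x,y)-\mathcal{K}_j(x,y_0)\bigr]a(y)\,d\nu_k(y).
\end{eqnarray*}
Then Fubini's theorem combined with the H\"ormander-type estimate (2.5), and Remark 3.1, gives
\begin{eqnarray*}
I_2&\leq&\int_B|a(y)|\Big(\int_{\min_g\|g.x-y\|>2\|y-y_0\|}|\mathcal{K}_j(x,y)-\mathcal{K}_j(x,y_0)|\,d\nu_k(x)\Big)d\nu_k(y)\\
&\leq& c\,\|a\|_{1,k}\leq c.
\end{eqnarray*}
Summing $I_1$ and $I_2$ concludes.

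The main technical point is the geometric verification on $\widetilde{B}^c$: one must check simultaneously that the domain hypothesis of the kernel representation (2.4) is met for every $x\in\widetilde{B}^c$ and that $\widetilde{B}^c$ sits inside the set $\{x:\min_g\|g.x-y\|>2\|y-y_0\|\}$ \emph{uniformly} in $y\in B$, so that (2.5) can be invoked inside the $y$-integral. This is exactly what forces the choice of the inflation factor $3$ in the definition of $\widetilde{B}$, and it is also why one needs the doubling bound to survive the passage through all $|G|$ reflected copies.
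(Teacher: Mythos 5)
Your proposal is correct and follows essentially the same route as the paper: the paper inflates the ball to $B^\ast=B(y_0,4r)$, sets $Q=\bigcup_{g\in G}g.B^\ast$, bounds the near part by H\"older, the $L^2_k$-boundedness of $\mathcal{R}_j^k$ and the doubling property, and handles the far part via the kernel representation, the cancellation of the atom and the H\"ormander condition, exactly as you do (your inflation factor $3$ works just as well as the paper's $4$). Only your equation labels are off relative to the paper — doubling is (2.1), the kernel representation is (2.5) and the H\"ormander estimate is (2.6) — but the facts invoked are the right ones.
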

\begin{proof}
Let $a$ be an $L^{2}_k$-atom supported in
$B=B(y_{0},r),\,y_{0}\in\mathbb{R}^{d}.$ If we put
$B^{\ast}=B(y_{0},4r)$ and $ Q=\displaystyle
\displaystyle\bigcup_{g\in G}g.B^{\ast},$ then we can write
\begin{eqnarray*}\int_{\mathbb{R}^{d}}|\mathcal{R}_{j}^k(a)(x)|d\nu_{k}(x)=
\int_{Q}|\mathcal{R}_{j}^k(a)(x)|d\nu_{k}(x)+\int_{Q^{c}}|\mathcal{R}_{j}^k(a)(x)|d\nu_{k}(x).\end{eqnarray*}
From (1.1), the Riesz transform $\mathcal{R}_{j}^k$ is of
strong-type $(2,2)$, then using H\"{o}lder's inequality, we obtain
\begin{eqnarray*}
  \int_{Q}|\mathcal{R}_{j}^k(a)(x)|d\nu_{k}(x) &\leq&
  (\nu_{k}(Q))^{\frac{1}{2}}\Big(\int_{Q}|\mathcal{R}_{j}^k(a)(x)|^{2}d\nu_{k}(x)\Big)^{\frac{1}{2}} \\
   &\leq& c \,(\nu_{k}(Q))^{\frac{1}{2}}\Big(\int_{B}|a(x)|^{2}d\nu_{k}(x)\Big)^{\frac{1}{2}}\\
   &\leq& c\,(\nu_{k}(Q))^{\frac{1}{2}}(\nu_{k}(B))^{-\frac{1}{2}}.
\end{eqnarray*}
From (2.1) we have, $\displaystyle \nu_{k}(Q)\leq \sum_{g\in
G}\nu_{k}(g.B^{*})\leq c\,|G|\,\nu_{k}(B),$ where $|G|$ is the order
of $G$. This gives that
\begin{eqnarray} \int_{Q}|\mathcal{R}_{j}^k(a)(x)|d\nu_{k}(x)\leq
c.\end{eqnarray} Now, if $x\notin Q$ then $\displaystyle \min_{g\in
G}\|g.x-y\|>2\|y-y_{0}\|,\,\,\forall y\in B.$  Using the integral
representation (2.5) for the Riesz transform $\mathcal{R}_{j}^k$ and
with the condition iii) for an $L^{2}_k$-atom, we have
\begin{eqnarray*}
  \int_{Q^{c}}|\mathcal{R}_{j}^k(a)(x)|d\nu_{k}(x) &=& \int_{Q^{c}}\Big|\int_{B}\mathcal{K}_{j}(x,y)a(y)d\nu_{k}(y)\Big|d\nu_{k}(x) \\
   &=& \int_{Q^{c}}\Big|\int_{B}\Big(\mathcal{K}_{j}(x,y)-\mathcal{K}_{j}(x,y_{0})\Big)a(y)d\nu_{k}(y)\Big|d\nu_{k}(x) \\
   &\leq&
   \int_{B}\int_{Q^{c}}\Big|\mathcal{K}_{j}(x,y)-\mathcal{K}_{j}(x,y_{0})\Big|d\nu_{k}(x)|a(y)|d\nu_{k}(y).
   \end{eqnarray*}
   Since $y\in B$, we obtain from (2.6)\\
   $ \displaystyle\int_{Q^{c}}\Big|\mathcal{K}_{j}(x,y)-\mathcal{K}_{j}(x,y_{0})\Big|d\nu_{k}(x)$
    \begin{eqnarray*}
    \leq\int_{\displaystyle \min_{g\in G}\|g.x-y\|>2\|y-y_{0}\|}
    \Big|\mathcal{K}_{j}(x,y)-\mathcal{K}_{j}(x,y_{0})\Big|d\nu_{k}(x)\leq c,\end{eqnarray*}
    and by the remark 3.1, we deduce that
   \begin{eqnarray}
   \int_{Q^{c}}|\mathcal{R}_{j}^k(a)(x)|d\nu_{k}(x)\leq c\int_{B}|a(y)|d\nu_{k}(y)\leq c.
\end{eqnarray}
Combining (3.1) and (3.2), we conclude that
\begin{eqnarray*}\|\mathcal{R}_{j}^k(a)\|_{1,k}\leq
c.\end{eqnarray*}
\end{proof}
\begin{theorem}
 The Riesz transforms $\mathcal{R}_{j}^k$, $1\leq j\leq d$ map $H^{1}_k(\mathbb{R}^{d})$ to
 $L^{1}_{k}(\mathbb{R}^{d})$ and we have for all $f\in
H^{1}_k(\mathbb{R}^{d})$,
\begin{eqnarray*}\|\mathcal{R}_{j}^k(f)\|_{1,k}\leq c\,\|f\|_{H^{1}_k}.\end{eqnarray*}
  \end{theorem}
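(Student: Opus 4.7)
The plan is to leverage Lemma 3.1 (the uniform $L^1_k$ bound on Riesz transforms of atoms) and pass from the atomic decomposition of $f$ to a convergent series for $\mathcal{R}_j^k(f)$ in $L^1_k(\mathbb{R}^d)$. The strategy is entirely standard once the atomic estimate is in hand: Riesz operators behave linearly on atoms in the sense of Lemma 3.1, so the triangle inequality together with the defining infimum of $\|f\|_{H^1_k}$ should do the job.

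First I would fix $f \in H^{1}_k(\mathbb{R}^{d})$ and an arbitrary atomic decomposition $f = \sum_{j=1}^{+\infty} \lambda_j a_j$, where the $a_j$ are $L^2_k$-atoms, $\sum_j |\lambda_j| < +\infty$, and the convergence holds in $L^{1}_k(\mathbb{R}^d) \cap L^{2}_k(\mathbb{R}^d)$. Since the partial sums $S_N = \sum_{j=1}^{N} \lambda_j a_j$ converge to $f$ in $L^{2}_k(\mathbb{R}^d)$ and $\mathcal{R}_j^k$ is bounded on $L^{2}_k(\mathbb{R}^d)$ by (1.1), we have $\mathcal{R}_j^k(S_N) \to \mathcal{R}_j^k(f)$ in $L^{2}_k(\mathbb{R}^d)$, hence by linearity
\begin{eqnarray*}
\mathcal{R}_j^k(f) = \sum_{j=1}^{+\infty} \lambda_j \,\mathcal{R}_j^k(a_j) \quad \mbox{in } L^{2}_k(\mathbb{R}^d).
\end{eqnarray*}

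Next I would show that this same series converges absolutely in $L^{1}_k(\mathbb{R}^{d})$. By Lemma 3.1 there is a constant $c>0$, independent of the atom, with $\|\mathcal{R}_j^k(a_j)\|_{1,k} \leq c$ for every $j$, so
\begin{eqnarray*}
\sum_{j=1}^{+\infty} |\lambda_j|\,\|\mathcal{R}_j^k(a_j)\|_{1,k} \leq c \sum_{j=1}^{+\infty} |\lambda_j| < +\infty.
\end{eqnarray*}
Completeness of $L^{1}_k(\mathbb{R}^d)$ then gives a function $g \in L^{1}_k(\mathbb{R}^d)$ with $g = \sum_j \lambda_j \mathcal{R}_j^k(a_j)$ in $L^{1}_k$, and $\|g\|_{1,k} \leq c \sum_j |\lambda_j|$. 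Extracting a subsequence along which the $L^{1}_k$ partial sums converge almost everywhere and comparing with the $L^{2}_k$ convergence above, we identify $g = \mathcal{R}_j^k(f)$ almost everywhere. Taking the infimum over all admissible atomic decompositions of $f$ yields $\|\mathcal{R}_j^k(f)\|_{1,k} \leq c\,\|f\|_{H^{1}_k}$.

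The only delicate point I anticipate is the interchange of the (a priori only $L^{2}_k$-defined) operator $\mathcal{R}_j^k$ with the infinite sum. Since the decomposition converges simultaneously in $L^{1}_k$ and $L^{2}_k$, and Lemma 3.1 delivers a uniform $L^{1}_k$ bound atom by atom, this step reduces to routine bookkeeping with two modes of convergence and an almost everywhere subsequence argument, rather than any new analytic difficulty.
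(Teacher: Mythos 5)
Your proof is correct and follows the same overall strategy as the paper: both arguments reduce the theorem to Lemma 3.1 and then must justify the termwise identity $\mathcal{R}_{j}^k(f)=\sum_{i}\lambda_{i}\mathcal{R}_{j}^k(a_{i})$ a.e., after which the triangle inequality and the infimum over decompositions finish the job. The only substantive difference is how that identity is established: the paper applies the weak-type $(1,1)$ bound to $f-\sum_{i=1}^{N}\lambda_{i}a_{i}$ (which tends to $0$ in $L^{1}_{k}$) and controls the tail $\sum_{i>N}\lambda_{i}\mathcal{R}_{j}^k(a_{i})$ in $L^{1}_{k}$ via Lemma 3.1, whereas you use the strong $(2,2)$ bound (1.1) on the partial sums (legitimate, since convergence in $L^{2}_{k}$ is built into the definition of $H^{1}_{k}$ here) and then identify the $L^{1}_{k}$ and $L^{2}_{k}$ limits through a.e.-convergent subsequences. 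Both routes are valid; yours is arguably a bit more elementary in that it bypasses the weak-type estimate entirely and uses only the $L^{2}_{k}$-boundedness already invoked in the proof of Lemma 3.1. One cosmetic point: you use $j$ simultaneously as the Riesz-transform index and as the summation index of the atomic decomposition; these should be given distinct names.
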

\begin{proof} Let $\displaystyle f \in
H^{1}_k(\mathbb{R}^d),$ $\displaystyle
f=\sum_{i=1}^{+\infty}\lambda_{i}a_{i}.$ For $\rho >0$ and $1\leq j
\leq d$, we have
\begin{eqnarray} \nu_{k}\Big(\Big\{\mathcal{R}_{j}^k(f)-\sum_{i=1}^{+\infty}\lambda_{i}
\mathcal{R}_{j}^k(a_{i})>\rho \Big\}\Big) &\leq&
\nu_{k}\Big(\Big\{\Big|\mathcal{R}_{j}^k(f)-\sum_{i=1}^{N}
\lambda_{i}\mathcal{R}_{j}^k(a_{i})\Big|>\frac{\rho}{2}\Big\}\Big)\nonumber\\&+&\nu_{k}
\Big(\Big\{\Big|\sum_{i=N+1}^{+\infty}\lambda_{i}\mathcal{R}_{j}^k(a_{i})\Big|>\frac{\rho}{2}\Big\}\Big).
\end{eqnarray}
Since $\mathcal{R}_{j}^k$ is of weak-type $(1,1)$, we assert that
\begin{eqnarray*} \nu_{k}\Big(\Big\{\Big|\mathcal{R}_{j}^k(f)-\sum_{i=1}^{N}
\lambda_{i}\mathcal{R}_{j}^k(a_{i})\Big|>\frac{\rho}{2}\Big\}\Big)\leq
\frac{2}{\rho}\,c\,\Big\|f-\sum_{i=1}^{N}\lambda_{i}a_{i}\Big\|_{1,k}\,,\end{eqnarray*}
and clearly \begin{eqnarray*} \nu_{k}
\Big(\Big\{\Big|\sum_{i=N+1}^{+\infty}\lambda_{i}\mathcal{R}_{j}^k(a_{i})\Big|>\frac{\rho}{2}\Big\}\Big)\leq
\frac{2}{\rho}\Big\|\sum_{i=N+1}^{+\infty}\lambda_{i}\mathcal{R}_{j}^k(a_{i})\Big\|_{1,k}\leq
\frac{2}{\rho}\,c\sum_{i=N+1}^{+\infty}|\lambda_{i}|.\end{eqnarray*}
Now using the fact that
$\displaystyle\sum_{i=1}^{N}\lambda_{i}a_{i}$ converges to $f$ in
$L^{1}_k(\mathbb{R}^d)$
 and $\displaystyle\sum_{i=1}^{+\infty}|\lambda_{i}|<+\infty$, we have both terms in the sum in (3.3)
  converge to zero as
 $N\rightarrow +\infty$. Hence, we deduce that \begin{eqnarray*}
\nu_{k}\Big(\Big\{\mathcal{R}_{j}^k(f)-\sum_{i=1}^{+\infty}\lambda_{i}\mathcal{R}_{j}^k(a_{i})>\rho
\Big\}\Big)=0,\;\mbox{for all}\; \rho >0,\end{eqnarray*} which gives
that
$\mathcal{R}_{j}^k(f)=\displaystyle\sum_{i=1}^{+\infty}\lambda_{i}\mathcal{R}_{j}^k(a_{i})\quad
a.e.$ Using the lemma 3.1, we can assert that
\begin{eqnarray*}\|\mathcal{R}_{j}^k(f)\|_{1,k}\leq \sum_{i=1}^{+\infty}|\lambda_{i}|\, \|\mathcal{R}_{j}^k(a_{i})\|_{1,k}
\leq c\,\sum_{i=1}^{+\infty}|\lambda_{i}|,\end{eqnarray*}which gives
$\|\mathcal{R}_{j}^k(f)\|_{1,k}\leq c\,\|f\|_{H^{1}_k}.$ This proves
our result.
\end{proof}
\begin{cor}
If $f\in H^{1}_k(\mathbb{R}^{d})$ then $\displaystyle
\int_{\mathbb{R}^{d}}f(x)d\nu_{k}(x)=0.$
\end{cor}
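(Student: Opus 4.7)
The plan is to establish the vanishing integral atom-by-atom and then pass to the limit via $L^1_k$ convergence of the atomic decomposition. First, I would observe that for any $L^2_k$-atom $a$ supported in a ball $B$, condition (iii) of the atom definition together with $\mathrm{supp}(a)\subset B$ gives $\int_{\mathbb{R}^d} a\,d\nu_k = \int_B a\,d\nu_k = 0$.

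Next, for a general $f\in H^1_k(\mathbb{R}^d)$, I would fix an atomic decomposition $f=\sum_{i=1}^{\infty}\lambda_i a_i$ with $\sum_i |\lambda_i|<+\infty$ and convergence in $L^1_k(\mathbb{R}^d)\cap L^2_k(\mathbb{R}^d)$. Since integration against $\nu_k$ is a bounded linear functional on $L^1_k(\mathbb{R}^d)$, the triangle inequality yields
\begin{equation*}
\Big|\int_{\mathbb{R}^d} f\,d\nu_k - \sum_{i=1}^{N}\lambda_i\int_{\mathbb{R}^d} a_i\,d\nu_k\Big| \leq \Big\|f - \sum_{i=1}^{N}\lambda_i a_i\Big\|_{1,k}.
\end{equation*}
Sending $N\to\infty$ and using that each $\int a_i\,d\nu_k=0$ from the first step gives $\int_{\mathbb{R}^d} f\,d\nu_k = 0$.

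There is no real obstacle here; the only point worth noting is that the mean-zero property of atoms is preserved under $L^1_k$ limits, which is essentially built into the definition of $H^1_k$. An alternative derivation, which better explains why the statement sits as a corollary of Theorem 3.1, goes via the Riesz transforms. Since $\mathcal{R}^k_j(f)\in L^1_k(\mathbb{R}^d)$, its Dunkl transform $\mathcal{F}_k(\mathcal{R}^k_j f)$ is continuous on $\mathbb{R}^d$; the multiplier identity (2.4), extended from $\mathcal{S}(\mathbb{R}^d)$ to $L^2_k(\mathbb{R}^d)$ by Plancherel, gives $\mathcal{F}_k(\mathcal{R}^k_j f)(\xi) = -i\xi_j\|\xi\|^{-1}\mathcal{F}_k(f)(\xi)$. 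The symbol $-i\xi_j/\|\xi\|$ has no limit as $\xi\to 0$ (compare the positive and negative $\xi_j$-axes), so continuity forces $\mathcal{F}_k(f)(0)=c_k\int_{\mathbb{R}^d} f\,d\nu_k$ to vanish.
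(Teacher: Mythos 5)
Your primary argument is correct but takes a genuinely different, more elementary route than the paper. The paper proves the corollary precisely via your ``alternative'' derivation: since Theorem 3.1 gives $\mathcal{R}_j^k(f)\in L^1_k(\mathbb{R}^d)$, its Dunkl transform lies in $\mathcal{C}_0(\mathbb{R}^d)$, and the multiplier relation (2.4) forces $\mathcal{F}_k(f)(0)=0$ because the symbol $-i\xi_j/\|\xi\|$ is discontinuous at the origin --- this is exactly why the statement is placed as a corollary of Theorem 3.1. Your main proof bypasses all of that: each atom integrates to zero by condition iii), and since $f\mapsto\int_{\mathbb{R}^d}f\,d\nu_k$ is a bounded functional on $L^1_k(\mathbb{R}^d)$ while the atomic series converges in $L^1_k(\mathbb{R}^d)$ by definition of $H^1_k$, the mean-zero property passes to the limit. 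This direct route is shorter, needs no boundedness of the Riesz transforms, and would work in any setting where $H^1$ is \emph{defined} by atomic decompositions converging in $L^1$; what the paper's route buys instead is a structural explanation (cancellation as a necessary condition for the Riesz transforms to land in $L^1_k$) that survives in formulations of $H^1$ not given by atoms. You also correctly flag the two small points the paper glosses over: the extension of (2.4) from $\mathcal{S}(\mathbb{R}^d)$ to all of $L^2_k(\mathbb{R}^d)$ by Plancherel, and the reason the discontinuity of the symbol at $0$ forces $\mathcal{F}_k(f)(0)=0$.
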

\begin{proof}
By Theorem 3.1, if $f\in H^{1}_k(\mathbb{R}^{d})$ then
$\mathcal{R}_{j}^k(f)\in L^{1}_{k}(\mathbb{R}^{d})$, $ 1\leq j \leq
d,$ then $\mathcal{F}_{k}(\mathcal{R}_{j}^k(f))$ is in
$\mathcal{C}_0(\mathbb{R}^{d})$. From (2.4), it follows that
$\mathcal{F}_{k}(\mathcal{R}_{j}^k(f))$ is continuous at zero if and
only if $\mathcal{F}_{k}(f)(0)=0$, which gives that $\displaystyle
\int_{\mathbb{R}^{d}}f(x)d\nu_{k}(x)=0.$
\end{proof}
\subsection{Dunkl transform on the Hardy space}
In this section, we study the Dunkl transform on the space
$H^{1}_k(\mathbb{R}^{d}).$ Before, we need to prove a useful result.
\begin{lemma}
There exists a positive constant $c>0$ such that for all
$L^{2}_k$-atom $a$ supported in the ball $B=B(0,R),$  and all $y \in
\mathbb{R}^{d}$, we have
\begin{eqnarray*}
|\mathcal{F}_{k}(a)(y)|\leq c\,R\,\|y\|.
\end{eqnarray*}
\end{lemma}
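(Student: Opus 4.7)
The plan is to exploit the vanishing-mean condition (iii) of the atom together with the bound on the derivative of the Dunkl kernel given in (2.4). Since $E_k(z,0)=1$ for every $z$ (as $E_k(0,z)=1$ by the definition of the kernel, together with the symmetry $E_k(z,z')=E_k(z',z)$), and since $a$ has integral zero on $B$, I would start by writing
\begin{eqnarray*}
\mathcal{F}_k(a)(y)=c_k\int_B a(x)E_k(-iy,x)\,d\nu_k(x)=c_k\int_B a(x)\bigl[E_k(-iy,x)-1\bigr]\,d\nu_k(x).
\end{eqnarray*}
Thus the goal reduces to a pointwise bound $|E_k(-iy,x)-1|\leq c\,\|x\|\,\|y\|$ for every $x\in B$.

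To obtain this pointwise estimate I would use the mean value theorem along the segment from $0$ to $x$ inside the first argument of the Dunkl kernel. By the symmetry $E_k(-iy,x)=E_k(x,-iy)$, I can differentiate in the variable $x$: for some $t\in[0,1]$,
\begin{eqnarray*}
E_k(-iy,x)-E_k(-iy,0)=\sum_{j=1}^{d}x_j\,\frac{\partial}{\partial x_j}E_k(-iy,tx).
\end{eqnarray*}
Now relabeling the variables in the derivative bound from (2.4), namely $|\frac{\partial}{\partial y_j}E_k(-ix,y)|\leq\|x\|$, gives $|\frac{\partial}{\partial x_j}E_k(-iy,tx)|\leq\|y\|$ for every $j$ and every $t$. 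Hence $|E_k(-iy,x)-1|\leq c\,\|x\|\,\|y\|$ with a constant depending only on $d$.

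Inserting this into the integral representation of $\mathcal{F}_k(a)(y)$, using that $\|x\|\leq R$ on $B=B(0,R)$, and invoking the $L^1_k$-bound $\|a\|_{1,k}\leq 1$ from Remark 3.1, I would conclude
\begin{eqnarray*}
|\mathcal{F}_k(a)(y)|\leq c_k\int_B|a(x)|\cdot c\,\|x\|\,\|y\|\,d\nu_k(x)\leq c\,R\,\|y\|\,\|a\|_{1,k}\leq c\,R\,\|y\|,
\end{eqnarray*}
which is exactly the desired inequality.

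The only delicate point I anticipate is the careful bookkeeping of which variable carries the derivative in the bound (2.4); once the symmetry $E_k(z,z')=E_k(z',z)$ is applied to swap the two slots, the derivative estimate applies in the slot containing the atom's argument, and the rest is routine. No argument about convergence of the integral is needed because $a\in L^1_k\cap L^2_k$ is compactly supported.
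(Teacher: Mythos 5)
Your proof is correct and follows essentially the same route as the paper: use the vanishing-mean condition to replace $E_k$ by $E_k-1$, bound $|E_k(\cdot)-1|\leq \sqrt{d}\,\|x\|\,\|y\|$ via the derivative estimate (2.3), and finish with $\|a\|_{1,k}\leq 1$ from Remark 3.1. The only cosmetic difference is that the paper differentiates directly in the $y$-slot of $E_k(-ix,y)$ (writing the increment in the integral form $\sum_j y_j\int_0^1\partial_{y_j}E_k(-ix,ty)\,dt$, which is the safer formulation since $E_k$ is complex-valued and the single-point mean value theorem does not literally apply), whereas you swap slots by symmetry first; both give the same bound.
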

\begin{proof}
By virtue of the condition iii) for an $L^{2}_k$-atom, we have for
$y \in \mathbb{R}^{d}$
\begin{eqnarray*}
\mathcal{F}_{k}(a)(y)=c_{k}\int_{B}[E_{k}(-ix,y)-1]\,a(x)\,d\nu_{k}(x).\end{eqnarray*}
Using mean value theorem and (2.3), we get for $x\in B$
\begin{eqnarray*}
|E_{k}(-ix,y)-1|&=&\Big|\sum_{j=1}^{d}y_{j}\int_{0}^{1}\frac{\partial E_{k}}{\partial y_{j}}(-ix,ty)\,dt\Big|\\
&\leq & \sum_{j=1}^{d}|y_{j}|\,\|x\|\\
&\leq &\sqrt{d}\,\|y\|\,\|x\| \leq \sqrt{d}\,R\,\|y\|.
\end{eqnarray*}
According to the remark 3.1, we can assert that
\begin{eqnarray*}
|\mathcal{F}_{k}(a)(y)|  \leq
c_{k}\sqrt{d}\,R\,\|y\|\,\|a\|_{1,k}\leq c \,R\,\|y\|\,,
\end{eqnarray*} which proves the result.
\end{proof}
\begin{theorem} Let $f\in H^{1}_k(\mathbb{R}^{d})$, then we have
 \begin{eqnarray*}\int_{\mathbb{R}^{d}}\|y\|^{-(2\gamma+d)}|\mathcal{F}_{k}(f)(y)|d\nu_{k}(y)\leq c\,\|f\|_{H^{1}_k}.
 \end{eqnarray*}
\end{theorem}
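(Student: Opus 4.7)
The approach is to reduce the theorem to an atom-wise estimate and sum. Given $f=\sum_{i} \lambda_i a_i\in H^1_k(\mathbb{R}^d)$ with $\sum_i|\lambda_i|<+\infty$, I first observe that because $\mathcal{F}_k$ sends $L^1_k$ continuously into $\mathcal{C}_0(\mathbb{R}^d)$ (property i in \S2), the partial sums $\sum_{i\le N}\lambda_i\mathcal{F}_k(a_i)$ converge uniformly to $\mathcal{F}_k(f)$; in particular $\mathcal{F}_k(f)(y)=\sum_i \lambda_i \mathcal{F}_k(a_i)(y)$ for every $y$. By Tonelli, it therefore suffices to produce a constant $c$ independent of the atom such that
\begin{equation*}
I(a):=\int_{\mathbb{R}^d}\|y\|^{-(2\gamma+d)}|\mathcal{F}_k(a)(y)|\,d\nu_k(y)\le c
\end{equation*}
for every $L^2_k$-atom $a$; summing over $i$ and taking the infimum over the atomic decompositions of $f$ will then yield the claim.

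\textbf{Key atom-wise estimate.} Fix an atom $a$ supported in $B=B(0,R)$ and split $I(a)$ at the natural scale $\|y\|=1/R$. For the \emph{low-frequency} part $\|y\|\le 1/R$, Lemma~3.2 gives $|\mathcal{F}_k(a)(y)|\le c R\|y\|$, so by the radial integration formula (2.2),
\begin{equation*}
\int_{\|y\|\le 1/R}\|y\|^{-(2\gamma+d)}|\mathcal{F}_k(a)(y)|\,d\nu_k(y)\le cR\cdot d_k\int_0^{1/R}ds=c\,d_k.
\end{equation*}
For the \emph{high-frequency} part $\|y\|>1/R$, I bypass the insufficient uniform bound $|\mathcal{F}_k(a)|\le 1$ by appealing to Cauchy--Schwarz combined with Plancherel (property iv) and the size condition ii) of an atom, which give $\|\mathcal{F}_k(a)\|_{2,k}=\|a\|_{2,k}\le (\nu_k(B))^{-1/2}$. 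Since $\nu_k(B)=\frac{d_k}{2\gamma+d}R^{2\gamma+d}$ and, again by (2.2),
\begin{equation*}
\int_{\|y\|>1/R}\|y\|^{-2(2\gamma+d)}\,d\nu_k(y)=d_k\int_{1/R}^{+\infty} s^{-(2\gamma+d+1)}\,ds=\frac{d_k}{2\gamma+d}R^{2\gamma+d},
\end{equation*}
the product of the two square roots yields a bound of order $R^{(2\gamma+d)/2}\cdot R^{-(2\gamma+d)/2}=1$. Adding the two contributions gives $I(a)\le c$.

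\textbf{Main obstacle.} The only genuinely delicate point is the high-frequency regime, where the trivial uniform bound $|\mathcal{F}_k(a)|\le \|a\|_{1,k}\le 1$ is \emph{not} integrable against the singular weight $\|y\|^{-(2\gamma+d)}$: after radial integration via (2.2), it reduces to the divergent $\int^{+\infty}s^{-1}\,ds$. Replacing the $L^\infty$ bound by the $L^2$ Plancherel control---which is available precisely because the atomic size condition ii) is formulated in $L^2_k$---cures this divergence, and the matching of scales between the $L^2$-mass $\|a\|_{2,k}\lesssim R^{-(\gamma+d/2)}$ and the weight integral at the threshold $1/R$ is what makes the final bound scale-invariant in $R$. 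The passage from a single atom to $f$ via the uniform convergence of $\sum_i\lambda_i\mathcal{F}_k(a_i)$ is routine once the atomic estimate is in hand.
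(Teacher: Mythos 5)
Your proposal is correct and follows essentially the same route as the paper: the same split of the atom-wise integral at the scale $\|y\|=1/R$, the same use of Lemma 3.2 with the polar-coordinate formula (2.2) for the low-frequency part, the same Cauchy--Schwarz/Plancherel argument exploiting the $L^2_k$ size condition of the atom for the high-frequency part, and the same passage from atoms to general $f$ via the continuity of $\mathcal{F}_k$ from $L^1_k(\mathbb{R}^d)$ into $\mathcal{C}_0(\mathbb{R}^d)$. The computations all check out.
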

\begin{proof}
Let $a$ be an $L^{2}_k$-atom supported in the ball $B=B(0,R).$ We
write
\begin{eqnarray*}\int_{\mathbb{R}^{d}}|\mathcal{F}_{k}(a)(y)|\frac{d\nu_{k}(y)}{\|y\|^{2\gamma+d}}&=&
\int_{\|y\|\leq \frac{1}{R}}|\mathcal{F}_{k}(a)(y)|\frac{d\nu_{k}(y)}{\|y\|^{2\gamma+d}}+
\int_{\|y\|>\frac{1}{R}}|\mathcal{F}_{k}(a)(y)|\frac{d\nu_{k}(y)}{\|y\|^{2\gamma+d}}\\
&=&I_{1}+I_{2}.
\end{eqnarray*}
Using the lemma 3.2 and (2.2), we can estimate $I_{1}$ as follows
\begin{eqnarray}
I_{1}&\leq & c \,R\int_{\|y\|\leq \frac{1}{R}}\|y\|^{-(2\gamma+d-1)}d\nu_{k}(y)\nonumber\\
&=&c \,R\, d_{k}\int_{0}^{\frac{1}{R}}r^{-(2\gamma+d-1)}\, r^{2\gamma+d-1}dr\nonumber\\
&=&c\, d_{k}\,.
\end{eqnarray}
To estimate $I_{2}$, we use H\"{o}lder's inequality, Plancherel's
theorem for the Dunkl transform $\mathcal{F}_{k},$ and (2.2)
\begin{eqnarray}
I_{2}&\leq & \Big(\int_{\|y\|>\frac{1}{R}}|\mathcal{F}_{k}(a)(y)|^{2}d\nu_{k}(y)\Big)^{\frac{1}{2}}
\Big(\int_{\|y\|>\frac{1}{R}}\|y\|^{-2(2\gamma+d)}d\nu_{k}(y)\Big)^{\frac{1}{2}}\nonumber\\
&\leq & \|a\|_{2,k}\sqrt{d_{k}}\Big(\int_{\frac{1}{R}}^{+\infty}r^{-2(2\gamma+d)}\,r^{2\gamma+d-1}\Big)^{\frac{1}{2}}dr\nonumber
\\
&\leq & (\nu_{k}(B))^{-\frac{1}{2}}\frac{\sqrt{d_{k}}}{\sqrt{2\gamma+d}}\,\Big(\frac{1}{R}\Big)^{-\frac{2\gamma+d}{2}}\nonumber
\\
&\leq &
\frac{\sqrt{2\gamma+d}}{\sqrt{d_{k}}}\,R^{-\frac{2\gamma+d}{2}}\frac{\sqrt{d_{k}}}{\sqrt{2\gamma+d}}\,
\Big(\frac{1}{R}\Big)^{-\frac{2\gamma+d}{2}}\nonumber\\
&=&1.
\end{eqnarray}
Hence, from (3.3) and (3.4), we obtain
\begin{eqnarray}\int_{\mathbb{R}^{d}}|\mathcal{F}_{k}(a(y))|\frac{d\nu_{k}(y)}{\|y\|^{2\gamma+d}}\leq c.\end{eqnarray}
Since $\mathcal{F}_{k}$ is a continuous linear mapping from
$L^{1}_{k}(\mathbb{R}^{d})$ to $\mathcal{C}_0(\mathbb{R}^{d})$, we
deduce that for $f\in H^{1}_k(\mathbb{R}^{d})$, $\displaystyle
f=\sum_{j=1}^{+\infty}\lambda_{j}a_{j}$, we have
$\mathcal{F}_{k}(f)(y)=\displaystyle
\sum_{j=1}^{+\infty}\lambda_{j}\mathcal{F}_{k}(a_{j})(y)\,\;
a.e.\,\;y\in \mathbb{R}^{d}.$ Using (3.6), this gives
\begin{eqnarray*}
\int_{\mathbb{R}^{d}}\|y\|^{-(2\gamma+d)}|\mathcal{F}_{k}(f(y))| d\nu_{k}(y) &\leq &\sum_{j=1}^{+\infty}|\lambda_{j}| \int_{\mathbb{R}^{d}}|\mathcal{F}_{k}(a_j(y))|\frac{d\nu_{k}(y)}{\|y\|^{2\gamma+d}}\\
&\leq &c\sum_{j=1}^{+\infty}|\lambda_{j}|,
\end{eqnarray*}
thus the proof is completed.
\end{proof}
\subsection{Hardy inequality for $H^{1}_k(\mathbb{R}^{d})$} In this section, we give a Hardy-type inequality on
$H^{1}_k(\mathbb{R}^{d}).$ Recall that the Hardy-type averaging
operator $\mathcal{H}_k$ is given by
\begin{eqnarray*}\mathcal{H}_kf(x)=\frac{1}{\nu_{k}(B_{\|x\|})}\int_{B_{\|x\|}}f(y)d\nu_{k}(y)\quad
with \quad B_{\|x\|}=B(0,\|x\|).\end{eqnarray*}In order to establish
the result, we need to show the following lemma.
\begin{lemma}
For every $L^{2}_k$-atom $a$ ,we have$$\|\mathcal{H}_k
a\|_{1,k}\leq2.$$
\end{lemma}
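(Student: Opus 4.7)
The key observation is that $\mathcal{H}_k a$ is radial, since the averaging kernel $\chi_{B_{\|x\|}}/\nu_k(B_{\|x\|})$ depends only on $\|x\|$. Setting $g(r):=\int_{B(0,r)} a\,d\nu_k$, so that $\mathcal{H}_k a(x)=g(\|x\|)/\nu_k(B_{\|x\|})$, the polar formula (2.2) combined with $\nu_k(B_r)=\frac{d_k}{2\gamma+d}r^{2\gamma+d}$ converts the $L^1_k$-norm into
\begin{equation*}
\|\mathcal{H}_k a\|_{1,k}=(2\gamma+d)\int_{0}^{+\infty}\frac{|g(r)|}{r}\,dr.
\end{equation*}
Write $B=B(y_0,r_0)$ for the ball containing $\mathrm{supp}(a)$ and set $R_1=\max(0,\|y_0\|-r_0)$, $R_2=\|y_0\|+r_0$. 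Then $g$ is supported in $[R_1,R_2]$: on $[0,R_1]$ one has $B\cap B_r=\emptyset$, and on $[R_2,+\infty)$ one has $B\subset B_r$, so that $g(r)=\int_B a\,d\nu_k=0$ by the mean-zero condition iii).

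To bound $|g(r)|$, I apply Cauchy--Schwarz in two complementary ways. From $g(r)=\int_{B\cap B_r}a\,d\nu_k$ and condition ii), $\|a\|_{2,k}\le\nu_k(B)^{-1/2}$, I get $|g(r)|\le\sqrt{\nu_k(B\cap B_r)/\nu_k(B)}$; and from the mean-zero rewriting $g(r)=-\int_{B\setminus B_r}a\,d\nu_k$, I get $|g(r)|\le\sqrt{\nu_k(B\setminus B_r)/\nu_k(B)}$. Substituting $u=\nu_k(B_r)$, under which $du/u=(2\gamma+d)\,dr/r$, and splitting the integral at the $r$-value where $\nu_k(B\cap B_r)=\nu_k(B\setminus B_r)=\nu_k(B)/2$, reduces everything to a one-dimensional calculation. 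In the model case $y_0=0$, where $B=B_{r_0}$ and hence $\nu_k(B\cap B_r)=\nu_k(B_r)=u$ for $r\le r_0$, the substitution gives exactly
\begin{equation*}
\|\mathcal{H}_k a\|_{1,k}\le\frac{1}{\sqrt{\nu_k(B)}}\int_{0}^{\nu_k(B)}\frac{du}{\sqrt{u}}=\frac{2\sqrt{\nu_k(B)}}{\sqrt{\nu_k(B)}}=2,
\end{equation*}
which is the sharp case.

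The main obstacle I foresee is the off-center case ($y_0\ne 0$), where $\nu_k(B\cap B_r)$ is strictly smaller than $\nu_k(B_r)$ and the naive replacement yields the ratio $2(\sqrt{u_2}-\sqrt{u_1})/\sqrt{\nu_k(B)}$ with $u_i=\nu_k(B_{R_i})$, which a priori exceeds $2$. To close this gap, the plan is to combine the two Cauchy--Schwarz bounds above with the inclusion $B\subset B_{R_2}\setminus B_{R_1}$ (equivalently $\nu_k(B)\le u_2-u_1$) and the elementary identity $(\sqrt{u_2}-\sqrt{u_1})^2\le u_2-u_1$; together with the midpoint split, these should yield the uniform constant $2$.
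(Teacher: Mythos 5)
Your centered case ($y_0=0$) is correct and, after the substitution $u=\nu_k(B_r)$, it is exactly the paper's argument: Cauchy--Schwarz against $\|a\|_{2,k}\le \nu_k(B)^{-1/2}$ followed by the polar evaluation $\int_B \nu_k(B_{\|x\|})^{-1/2}\,d\nu_k(x)=2\sqrt{\nu_k(B)}$. Be aware that the paper's own proof treats \emph{only} this case: it opens with ``let $a$ be an atom supported in $B=B(0,R)$'', even though the definition of an atom in the introduction allows an arbitrary ball. So, measured against what the paper actually proves, your argument is complete and essentially identical; you have correctly identified that the off-center case is an unaddressed gap.

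However, the plan you sketch for the off-center case does not close. The two auxiliary inequalities you invoke point the wrong way: from $\sqrt{u_2}-\sqrt{u_1}\le\sqrt{u_2-u_1}$ and $\nu_k(B)\le u_2-u_1$ you get $2(\sqrt{u_2}-\sqrt{u_1})/\sqrt{\nu_k(B)}\le 2\sqrt{u_2-u_1}/\sqrt{\nu_k(B)}$, and the right-hand side is \emph{at least} $2$ (and can be arbitrarily large), so nothing is gained. The midpoint split does not rescue this: the Lipschitz bounds $\nu_k(B\cap B_r)\le u-u_1$ and $\nu_k(B\setminus B_r)\le u_2-u$ control the integrand only on the initial and final layers carrying $\nu_k$-mass $\nu_k(B)/2$ each; on the middle range $u_1+\nu_k(B)/2\le u\le u_2-\nu_k(B)/2$, which is nonempty in every off-center case since then $\nu_k(B)<u_2-u_1$, the only bound available from your ingredients is $\min(\sqrt{m},\sqrt{\nu_k(B)-m})\le\sqrt{\nu_k(B)/2}$, and integrating $du/u$ over that range produces a term of size $\tfrac{1}{\sqrt2}\log(u_2/u_1)$, which is controlled only by the doubling constant of $\nu_k$ in (2.1), not by the budget left over from the two end layers. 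At best this route gives $\|\mathcal{H}_k a\|_{1,k}\le C$ with $C$ depending on $k$ and $d$; obtaining the clean constant $2$ for general atoms would require a genuinely sharper pointwise estimate on $g$, or else one should state the lemma (as the paper implicitly does) for atoms supported in balls centered at the origin.
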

\begin{proof}
Let $a$ be an $L^{2}_k$-atom on $\mathbb{R}^{d}$, supported in the
ball $B=B(0,R),$ then $supp\,(\mathcal{H}_ka)\subset B$. Indeed, if
$\|x\|>R$ then $B_{\|x\|}\cap B=B$ and $\mathcal{H}_ka=0.$ We have,
\begin{eqnarray*}
    % \nonumber to remove numbering (before each equation)
      \|\mathcal{H}_ka\|_{1,k} &\leq& \int_{B}\Big(\frac{1}{\nu_{k}(B_{\|x\|})}\int_{B_{\|x\|}}|a(y)|d\nu_{k}(y)\Big)d\nu_{k}(x) \\
       &\leq& \int_{B}\frac{1}{\nu_{k}(B_{\|x\|})}\Big(\int_{B_{\|x\|}}|a(y)|^{2}d\nu_{k}(y)\Big)^{\frac{1}{2}}\Big(\nu_{k}(B_{\|x\|})\Big)^{\frac{1}{2}}d\nu_{k}(x) \\
      &\leq& \|a\|_{2,k}\int_{B}\Big(\nu_{k}(B_{\|x\|})\Big)^{-\frac{1}{2}}d\nu_{k}(x).
       \end{eqnarray*}
By the property ii) of $a$ and (2.2), we deduce that
\begin{eqnarray*}
% \nonumber to remove numbering (before each equation)
  \|\mathcal{H}_ka\|_{1,k}  &\leq& \Big(\frac{d_{k}}{2\gamma+d}R^{2\gamma+d}\Big)^{-\frac{1}{2}}\int_{B}\Big(\frac{d_{k}}{2\gamma+d}\|x\|^{2\gamma+d}\Big)^{-\frac{1}{2}}d\nu_{k}(x) \\
   &\leq& \Big(\frac{d_{k}}{2\gamma+d}\Big)^{-1} R^{-\frac{2\gamma+d}{2}}d_{k}\int_{0}^{R}r^{-\frac{2\gamma+d}{2}}.r^{2\gamma+d-1}dr\\
   &=&(2\gamma+d)R^{-\frac{2\gamma+d}{2}}\int_{0}^{R}r^{\frac{2\gamma+d}{2}-1}dr\,=\,2.
\end{eqnarray*}
\end{proof}
\begin{remark}
Let $FH^{1}_k(\mathbb{R}^{d})$ be the set of all finite linear
combinations of $L^{2}_k$-atoms. Using the linearity of
$\mathcal{H}_k$ and the lemma 3.3, it's easy to see that if $f\in
FH^{1}_k(\mathbb{R}^{d}),$
$f=\displaystyle\sum_{j=1}^{N}\lambda_{j}a_{j}$,
 then
 $\displaystyle\mathcal{H}_kf=\sum_{j=1}^{N}\lambda_{j}\mathcal{H}_ka_{j}$
 and $\displaystyle\|\mathcal{H}_kf\|_{1,k}\leq
2\sum_{j=1}^{N}|\lambda_{j}|,$ which implies that
$\|\mathcal{H}_kf\|_{1,k}\leq  2\,\|f\|_{H^{1}_k}.$
\end{remark}
\begin{theorem}
For all $f \in  H^{1}_k(\mathbb{R}^{d})$, we have
\begin{eqnarray*}
\|\mathcal{H}_kf\|_{1,k}\leq 2\,\|f\|_{H^{1}_k}.
\end{eqnarray*}
\end{theorem}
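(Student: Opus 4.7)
The plan is to extend the finite-decomposition bound from Remark 3.3 to arbitrary $f\in H^1_k(\mathbb{R}^d)$ by a limiting argument. The only real issue is that $\mathcal{H}_k$ is not obviously bounded from $L^1_k$ to $L^1_k$, so I cannot simply invoke continuity; instead I will identify the $L^1_k$-limit of the partial sums $\sum_{j=1}^{N}\lambda_j\mathcal{H}_k a_j$ with $\mathcal{H}_k f$ itself via pointwise considerations.

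First I would fix an atomic decomposition $f=\sum_{j=1}^{+\infty}\lambda_j a_j$ with $\sum|\lambda_j|<+\infty$, converging in $L^1_k(\mathbb{R}^d)\cap L^2_k(\mathbb{R}^d)$, and set $g_N=\sum_{j=1}^{N}\lambda_j a_j$. By Lemma 3.3 each $\mathcal{H}_k a_j$ satisfies $\|\mathcal{H}_k a_j\|_{1,k}\leq 2$, so the series $\sum_{j=1}^{+\infty}\lambda_j\mathcal{H}_k a_j$ converges absolutely in $L^1_k(\mathbb{R}^d)$ to some $h\in L^1_k(\mathbb{R}^d)$ with
\begin{eqnarray*}
\|h\|_{1,k}\leq 2\sum_{j=1}^{+\infty}|\lambda_j|.
\end{eqnarray*}
By linearity of $\mathcal{H}_k$ on finite sums (as noted in Remark 3.3), $\mathcal{H}_k g_N=\sum_{j=1}^{N}\lambda_j\mathcal{H}_k a_j$, so $\mathcal{H}_k g_N \to h$ in $L^1_k(\mathbb{R}^d)$.

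Next I would show that $\mathcal{H}_k g_N(x)\to \mathcal{H}_k f(x)$ pointwise for every $x\in\mathbb{R}^d\setminus\{0\}$. Indeed, for such $x$ the ball $B_{\|x\|}=B(0,\|x\|)$ is fixed and has finite $\nu_k$-measure, so the $L^1_k$ convergence $g_N\to f$ forces $\int_{B_{\|x\|}}g_N\,d\nu_k\to\int_{B_{\|x\|}}f\,d\nu_k$, and dividing by $\nu_k(B_{\|x\|})$ gives the claim. Combined with the $L^1_k$-convergence $\mathcal{H}_k g_N\to h$, after extracting a subsequence converging $\nu_k$-almost everywhere, the uniqueness of limits yields $h=\mathcal{H}_k f$ almost everywhere.

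Therefore $\mathcal{H}_k f\in L^1_k(\mathbb{R}^d)$ and
\begin{eqnarray*}
\|\mathcal{H}_k f\|_{1,k}=\|h\|_{1,k}\leq 2\sum_{j=1}^{+\infty}|\lambda_j|.
\end{eqnarray*}
Taking the infimum over all admissible atomic decompositions of $f$ produces the desired inequality $\|\mathcal{H}_k f\|_{1,k}\leq 2\,\|f\|_{H^1_k}$. The main obstacle, as indicated above, is precisely the step of exchanging $\mathcal{H}_k$ with the infinite sum; I handle it by comparing the $L^1_k$-limit of the partial sums with the pointwise (off the origin) limit of $\mathcal{H}_k g_N$, which is an elementary consequence of $L^1_k$-convergence against the indicator of a fixed ball of finite measure.
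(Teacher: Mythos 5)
Your proof is correct, and its overall skeleton matches the paper's: identify $\mathcal{H}_k f$ with $\sum_{j}\lambda_j\mathcal{H}_k a_j$ almost everywhere, then apply Lemma 3.3 termwise and take the infimum over decompositions. The difference lies in how you justify the interchange of $\mathcal{H}_k$ with the infinite sum. The paper invokes the weighted Hardy inequality (1.2) with $p=2$, i.e.\ the $L^2_k$-boundedness $\|\mathcal{H}_k g\|_{2,k}\leq 2\|g\|_{2,k}$, applied to $g=f-\sum_{j=1}^{N}\lambda_j a_j$, together with the $L^2_k$-convergence of the atomic decomposition; this gives $\mathcal{H}_k g_N\to\mathcal{H}_k f$ in $L^2_k$ and hence the a.e.\ identification. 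You instead avoid any boundedness of $\mathcal{H}_k$ beyond Lemma 3.3: you observe that $L^1_k$-convergence $g_N\to f$ tested against $\nu_k(B_{\|x\|})^{-1}\chi_{B_{\|x\|}}$ gives pointwise convergence $\mathcal{H}_k g_N(x)\to\mathcal{H}_k f(x)$ for every $x\neq 0$, and you match this with an a.e.\ convergent subsequence of the $L^1_k$-limit $h$ of the partial sums. Your route is more elementary and self-contained (it does not use the $p=2$ Hardy inequality from reference [2] and needs only the $L^1_k$-convergence of the decomposition, not the $L^2_k$-convergence), while the paper's route is shorter given that (1.2) is already available. Both arguments are complete; your pointwise step is valid because $\nu_k(B_{\|x\|})=\frac{d_k}{2\gamma+d}\|x\|^{2\gamma+d}>0$ for $x\neq 0$.
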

\begin{proof} For $f\in H^{1}_k(\mathbb{R}^{d})$, $\displaystyle
f=\sum_{j=1}^{+\infty}\lambda_{j}a_{j}$ and a positive integer $N$,
we have from the linearity of $\mathcal{H}_k$ and (1.2)
\begin{eqnarray*}\Big\|\mathcal{H}_kf-\sum_{j=1}^{N}\lambda_{j}\mathcal{H}_ka_{j}\Big\|_{2,k}=\Big\|\mathcal{H}_k(f-\sum_{j=1}^{N}\lambda_{j}a_{j})\Big\|_{2,k}
\leq 2\,\Big\|f-\sum_{j=1}^{N}\lambda_{j}a_{j}\Big\|_{2,k},
 \end{eqnarray*}which converges to zero as $N\rightarrow +\infty.$ This implies
  that $\displaystyle\mathcal{H}_kf=\sum_{j=1}^{+\infty}\lambda_{j}\mathcal{H}_ka_{j},
 \;\, a.e.$  \\
Finally, using Lemma 3.3, we can assert that
\begin{eqnarray*}
 \|\mathcal{H}_kf\|_{1,k}\leq\sum_{j=1}^{+\infty}|\lambda_{j}|\,\|\mathcal{H}_ka_{j}\|_{1,k}\leq
2\sum_{j=1}^{+\infty}|\lambda_{j}|,
\end{eqnarray*}which gives $\|\mathcal{H}_kf\|_{1,k}\leq 2\,\|f\|_{H^{1}_k}.$  This completes the proof.
\end{proof}

% ----------------------------------------------------------------
\end{document}